 \newcommand {\theoremstyle} [1] { }
 \newenvironment{proof}{{\noindent\it\underline{Proof}}:}{\hfill$\Box$}
 \newtheorem{thm}{Theorem}[section]
 \newtheorem{prop}{Proposition}[section]
 \theoremstyle{plain}
 \newtheorem{cor}{Corollary}[section]
 \theoremstyle{definition}
 \theoremstyle{remark}
 \newtheorem{rem}[thm]{Remark}
\def\R{\mathbb{R}}
\def \ee{\varepsilon}
\def \a{\alpha}
\def \b{\beta}
\def\I{\mathcal{I}}
\def\C{\mathcal{C}}
\def\Z{\mathbb{Z}}
\begin{document}

\author{Pablo Amster}

\title{On  a theorem by Browder and its application to nonlinear boundary value problems\footnote{The present manuscript corresponds to the  version initially accepted for publication at Bulletin of the London Mathematical Society (to appear).}}
\date{}
\maketitle 
\begin{center}
    {\bf\large}\vspace{1mm}
{\small Departamento de Matem\'atica, Facultad de Ciencias Exactas y Naturales.}\\ 
{\small Universidad de Buenos Aires \& IMAS-CONICET.}\\
{\small Ciudad Universitaria Pabell\'on I (1428), Buenos Aires, Argentina.}\\
{\small Email:   pamster@dm.uba.ar} 
\end{center}

\begin{abstract}
In a paper from 1960, Felix Browder 
established a theorem concerning 
the continuation of the fixed points of a 
family of continuous 
functions $f_t:X\to X$ depending continuously on a parameter $t\in [0,1]$, 
where $X$ is a convex and compact subset of $\R^n$. 
Here, the    result is 
presented for a compact mapping $f:A\times X\to X$ where $X$ is a 
convex, closed and bounded subset of  an arbitrary normed space and $A$ is an arcwise connected  topological space.  Applications 
to nonlinear boundary value problems are given; specifically, we shall  present new viewpoints of  known results, introduce some novel results and exhibit some open problems. 

\end{abstract}

\section{Introduction}

A theorem established by Browder in \cite{browder} guarantees that if  $f:[0,1]\times X\to X$ is continuous, with $X\subset \R^n$ is compact and   convex, 
then the set $$\mathcal F_{[0,1]}:= \{(t,x): f(t,x)=x\}= \bigcup_{t\in [0,1]} \mathcal \{t\}\times \textup{Fix}(f_t)
$$
has a connected component $\C$ that intersects 
both $\mathcal F_0:=\{0\}\times \textup{Fix}(f_0)$ and $\mathcal F_1:=\{1\}\times \textup{Fix}(f_1)$. 
We shall refer to this situation 
by saying that $\C$ connects $\mathcal F_0$ and $\mathcal F_1$. 
Here,  the notation
$\textup{Fix}(g)$ stands for the set of fixed points of a mapping $g$ and, as usual,  $f_t(x):=f(t,x)$. 
A trivial application could be the following version of the Poincar\'e-Miranda Theorem:
if  $\phi:[0,1]^2\to \R^2$ is  continuous and satisfies
$$\phi_1(0,x)< 0 <\phi_1(1,x)\qquad x\in [0,1]
$$
$$\phi_2(t,0)< 0 <\phi_2(t,1)\qquad t\in [0,1]
$$
then $\phi$ vanishes at some $(t,x)\in (0,1)^2$. 
Indeed, it suffices to fix $M>0$ large enough such that the function 
 $f(t,x):= x - \frac  {\phi_2(t,x)}M$ satisfies 
$f([0,1]^2)\subset [0,1]$. Taking    $\C\subset\mathcal F_{[0,1]}$ that connects $\mathcal F_0$ with $\mathcal F_1$ and noticing that $\phi|_{\mathcal F_0} < 0 < \phi|_{\mathcal F_1}$, 
it is seen that $\phi_1$ vanishes at some     $(t,x)\in \C$ and, consequently, $\phi(t,x)=(0,0)$.     This example may give the illusion of an inductive 
proof of the Brouwer theorem because, if Browder's result is assumed 
for $n$ and 
$f=(f_1,f_2)$ is   a continuous mapping from  
$[0,1]\times [0,1]^n$ into itself, then there exists a continuum $\mathcal C$ of points $(t,x)$ such that $f_2(t,x)=x$ and goes from $t=0$ to $t=1$. Thus, the existence of a fixed point of $f$ follows, because  the continuous 
mapping $\Phi(t,x):= t-f_1(t,x)$ changes sign over $\mathcal C$. However, 
as said in \cite{soso2}, the $n$-dimensional Browder theorem 
is already an equivalent form of the $n+1$-dimensional Brouwer theorem, so the inductive argument fails. 

Our applications to boundary value problems will be based on the fact that the result is still valid 
when $X$ is (homeomorphic to) a 
convex, closed and bounded subset of  an arbitrary normed space $E$, provided that $\textup{Im}(f)$ has compact closure. As a consequence, the following result is deduced in a straightforward manner:

\begin{thm}\label{main}
Let $E$ be a normed space and let 
$X\subset E$ be bounded, closed and convex. Consider a continuous mapping $f:A\times X\to X$, where $A$ is an arcwise connected topological space. Assume that  $\overline{f(K\times X)}$ is compact for any compact subset $K\subset A$. Then for each $\a,\b\ \in A$ there exists a connected set 
$$\C\subset \mathcal F_A:= \{(\sigma,x)\in A\times X:f(\sigma,x)=x\}=
\bigcup_{\sigma\in A} \{\sigma\}\times \textup{Fix}(f_\sigma):= \bigcup_{\sigma\in A}\mathcal F_\sigma$$ such that 
$$\C\cap \mathcal F_\a  \ne \emptyset, \qquad
\C\cap \mathcal F_\b\ne \emptyset.
$$
\end{thm}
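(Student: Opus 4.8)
The plan is to reduce the general statement to the classical Browder theorem, which is the case $A=[0,1]$ and $X$ compact convex in $\R^n$, by a sequence of reductions. The key conceptual point is that arcwise connectedness of $A$ lets me connect $\a$ and $\b$ by a continuous path, which turns the abstract parameter space into the interval $[0,1]$; and the compactness hypothesis on $\overline{f(K\times X)}$ lets me replace the infinite-dimensional, merely bounded set $X$ by something to which a finite-dimensional continuation result applies.

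\medskip

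\noindent\textbf{Step 1 (reduce to a path).} Since $A$ is arcwise connected, choose a continuous map $\gamma:[0,1]\to A$ with $\gamma(0)=\a$ and $\gamma(1)=\b$. Set $K:=\gamma([0,1])$, a compact subset of $A$, and define $g:[0,1]\times X\to X$ by $g(t,x):=f(\gamma(t),x)$. Then $g$ is continuous and, because $\overline{g([0,1]\times X)}\subset\overline{f(K\times X)}$, its image has compact closure. A connected set $\C'\subset\{(t,x):g(t,x)=x\}$ meeting both $\{0\}\times\mathrm{Fix}(g_0)$ and $\{1\}\times\mathrm{Fix}(g_1)$ pushes forward under $(t,x)\mapsto(\gamma(t),x)$ to a connected set $\C\subset\mathcal F_A$ meeting $\mathcal F_\a$ and $\mathcal F_\b$ (here I use that $g_0=f_\a$ and $g_1=f_\b$, and that the continuous image of a connected set is connected). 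So it suffices to prove the theorem for $A=[0,1]$.

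\medskip

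\noindent\textbf{Step 2 (reduce to finite dimensions).} With $A=[0,1]$, I want to invoke the $\R^n$ version of Browder's theorem. The obstruction is that $X$ lives in an arbitrary normed space and need not be compact. The standard device is a Schauder-type finite-dimensional approximation: since $\overline{f([0,1]\times X)}=:C$ is compact, for each $\ee>0$ I can cover $C$ by finitely many $\ee$-balls centered at points $y_1,\dots,y_m$ and use the associated Schauder projection $P_\ee$ onto the convex hull $X_\ee:=\mathrm{conv}\{y_1,\dots,y_m\}$, which is a compact convex subset of a finite-dimensional subspace and satisfies $\|P_\ee(y)-y\|<\ee$ for all $y\in C$. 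The map $f_\ee:=P_\ee\circ f$ then carries $[0,1]\times X_\ee$ into $X_\ee$, so Browder's finite-dimensional theorem yields a connected component $\C_\ee$ of $\mathrm{Fix}$-pairs of $f_\ee$ joining the fibers over $t=0$ and $t=1$.

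\medskip

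\noindent\textbf{Step 3 (pass to the limit).} The final and most delicate step is to extract a genuine connected continuum for $f$ from the approximate continua $\C_\ee$ as $\ee\to 0$. Each $\C_\ee$ lies in $[0,1]\times C'$ for a fixed compact set, and the approximate fixed points satisfy $\|f(t,x)-x\|=\|f(t,x)-f_\ee(t,x)\|<\ee$, so any accumulation behavior lands on genuine fixed points. I expect this to be the main obstacle, because connectedness does not pass to limits as easily as compactness: pointwise Hausdorff limits of connected sets can disconnect. The clean way to handle it is a standard ``whole-space'' connectedness lemma (of the type: in a compact metric space, if two compacta $F_0,F_1$ cannot be separated by disjoint open sets at every approximation level, then there is a connected set meeting both), applied to the compact set $\mathcal F_{[0,1]}$ together with the boundary fibers $\mathcal F_0,\mathcal F_1$. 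One shows that no separation of $\mathcal F_{[0,1]}$ isolating $\mathcal F_0$ from $\mathcal F_1$ can exist, since such a separation would persist for $f_\ee$ with small $\ee$ and contradict the connectedness of $\C_\ee$; the desired continuum $\C$ is then the connected component of $\mathcal F_{[0,1]}$ containing $\mathcal F_0$, which necessarily also meets $\mathcal F_1$.
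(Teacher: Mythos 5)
Your proof is correct, but for the core case $A=[0,1]$ it takes a genuinely different route from the paper. Your Step 1 is exactly the paper's own reduction (compose with a path $\gamma$ and push the continuum forward), but from there the paper never leaves infinite dimensions: it extends $f$ compactly to $[0,1]\times E$ via Dugundji's theorem, assumes no continuum joins $\mathcal F_0$ to $\mathcal F_1$, applies Whyburn's lemma to get a bounded open set $U$ containing $\mathcal F_0$, disjoint from $\mathcal F_1$, with $\partial U\cap\mathcal F_{[0,1]}=\emptyset$, and then reaches a contradiction through the Leray--Schauder degree: homotopy invariance makes $\deg_{LS}(I-f_t,U_t,0)$ constant, while excision and the homotopy $I-\lambda f_0$ on a large ball give $\deg_{LS}(I-f_0,U_0,0)=1$, yet $\deg_{LS}(I-f_1,U_1,0)=0$. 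You instead discretize: Schauder projections $P_\ee$ produce self-maps $f_\ee$ of compact convex finite-dimensional sets $X_\ee$, Browder's original theorem yields continua $\C_\ee$ of $\ee$-approximate fixed points of $f$, and a persistence-of-separation argument (Whyburn's lemma plus compactness of $\overline{f([0,1]\times X)}$) shows that any separation of $\mathcal F_{[0,1]}$ isolating $\mathcal F_0$ from $\mathcal F_1$ would confine some connected $\C_\ee$ to a union of two disjoint open sets while forcing it to meet both. Both arguments pivot on Whyburn's lemma; yours trades the infinite-dimensional degree for the finite-dimensional Browder theorem plus the limiting argument of Step 3, so it is more elementary in the tools it invokes (no Leray--Schauder theory at all --- note the paper's Remark 2.1 records a different degree-free route, applying Schauder's theorem to $F(t,x)=(1-\mu(t,x),f(t,x))$ with a Urysohn function $\mu$), at the cost of being longer and of needing the compactness bookkeeping you correctly identify as the delicate point. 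One small imprecision in your last sentence: the desired continuum is not ``the connected component of $\mathcal F_{[0,1]}$ containing $\mathcal F_0$,'' since $\mathcal F_0$ need not be connected and so need not lie in a single component; what Whyburn's lemma (and your separation argument) actually delivers is \emph{some} connected component meeting both $\mathcal F_0$ and $\mathcal F_1$, which is all the theorem asks for.
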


Indeed, Theorem \ref{main} is deduced from the particular case $A=[0,1]$ as follows. For $\a, \b\in A$, take a continuous curve $\gamma:[0,1]\to A$ such that $\gamma(0)=\a$, $\gamma(1)=\b$ and define $f^\gamma:[0,1]\times X\to X$ by $f^\gamma(t,x):=f(\gamma(t),x)$. Because $f^\gamma$ is continuous and $K:=\gamma([0,1])$ is compact, the particular case implies the existence of $\C_\gamma\subset \mathcal F^\gamma_{[0,1]}\subset [0,1]\times X$ connecting  
$\{0\}\times \textup{Fix}(f^\gamma_0)$ and
$\{1\}\times \textup{Fix}(f^\gamma_1)$. Thus, the set
$$\C:=\{ (\gamma(t),x) : (t,x)\in \C_\gamma \}\subset\mathcal F_A \subset A\times X $$
connects $\mathcal F_\a$  and $\mathcal F_\b$.

It is worth  mentioning that, although $\gamma$ is a continuous curve, the set $\C$ may not be an arc. Interestingly, it was shown in \cite{genericity} that if $E=\R^n$ and $f$ is smooth, then $\C$ is generically diffeomorphic to $[0,1]$: 
in more precise terms, if one considers the set  
$\mathcal S $ of $C^2$ functions $f:[0,1]\times X\to X$ endowed with the $C^1$ norm, then 
there exists an open dense set $\mathcal S_0\subset \mathcal S$ such that, for any $f\in \mathcal S_0$,  each connected component $\C$ intersecting $\mathcal F_0$ is an arc. 

The original version of Browder's theorem has encountered applications in diverse fields, such as the nonlinear complementary theorem in programming theory \cite{E1}
and game theory \cite{G1}. 
The infinite-dimensional 
version expressed by Theorem \ref{main} with $A=[a,b]$ was employed in \cite{shaw}   
to prove the existence of solutions for an elliptic problem at 
resonance. Here, it is worth mentioning that no proof is given; instead, the author invokes the 
foundational work of Leray and Schauder \cite{LS}. 
Although it is true that Theorem \ref{main}  can be 
obtained by adapting the results in the latter paper (see also   
\cite{maw}), the invariance 
of the mappings $f_t$ allows  a considerably shorter treatment, as  shown below.  

A more subtle question arises on the following observation. 
When $A=[0,1]$, the original result can be   equivalently formulated as: 
 
\begin{thm}
Let $E$ be a normed space and let 
$X\subset E$ be bounded, closed and convex. Consider a continuous mapping $f:[0,1]\times X\to X$such that  $\overline{\textup{Im}(f)}$ is compact.  
Then there exists   
a connected set $\C\subset \mathcal F_{[0,1]}$ such that the projection $\pi:\C\to [0,1]$ defined by $\pi(t,x):=t$ is onto.  
\end{thm}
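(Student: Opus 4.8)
The plan is to obtain this statement as a direct specialization of Theorem \ref{main}, reinterpreting the connecting set through the projection onto the parameter. First I would apply Theorem \ref{main} with $A=[0,1]$, $\a=0$ and $\b=1$. The hypotheses transfer verbatim: $[0,1]$ is arcwise connected, and the compactness assumption on $\overline{f(K\times X)}$ for compact $K$ reduces, for the single compact set $K=[0,1]$, to the hypothesis $\overline{\textup{Im}(f)}=\overline{f([0,1]\times X)}$ compact. This yields a connected set $\C\subset\mathcal F_{[0,1]}$ with $\C\cap\mathcal F_0\neq\emptyset$ and $\C\cap\mathcal F_1\neq\emptyset$.

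The key step is then purely topological. Since $\pi(t,x):=t$ is continuous and $\C$ is connected, the image $\pi(\C)$ is a connected subset of $[0,1]$, hence a subinterval. The condition $\C\cap\mathcal F_0\neq\emptyset$ forces $0\in\pi(\C)$, and $\C\cap\mathcal F_1\neq\emptyset$ forces $1\in\pi(\C)$. A connected subset of $[0,1]$ containing both endpoints is the whole of $[0,1]$, so $\pi(\C)=[0,1]$ and the projection is onto. The one point meriting a moment's care is confirming $\pi(\C)\subseteq[0,1]$, which holds since $\C\subset\mathcal F_{[0,1]}\subset[0,1]\times X$.

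To honor the word \emph{equivalently}, I would also record the reverse implication, which costs nothing: if a connected $\C\subset\mathcal F_{[0,1]}$ satisfies $\pi(\C)=[0,1]$, then $0,1\in\pi(\C)$ means $\C$ contains points $(0,x)$ and $(1,y)$ with $x\in\textup{Fix}(f_0)$ and $y\in\textup{Fix}(f_1)$, so $\C\cap\mathcal F_0\neq\emptyset$ and $\C\cap\mathcal F_1\neq\emptyset$, recovering the conclusion of Theorem \ref{main} for $\a=0$, $\b=1$. I do not anticipate any genuine obstacle: the entire mathematical substance resides in Theorem \ref{main}, and the passage to the surjectivity formulation rests only on the elementary facts that continuous images of connected sets are connected and that the connected subsets of the line are intervals.
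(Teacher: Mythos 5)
Your proposal is correct and matches the paper's own argument, which dispenses with the equivalence in one line by noting that the only connected subsets of $\R$ are intervals; your write-up simply makes explicit the same reasoning (the continuous image $\pi(\C)$ is connected, contains $0$ and $1$, hence equals $[0,1]$), including the correct observation that for $A=[0,1]$ the compactness hypothesis of Theorem \ref{main} reduces to compactness of $\overline{\textup{Im}(f)}$. Nothing is missing and no new route is taken.
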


The equivalence follows immediately from the fact that
the only connected subsets of $\R$ are the intervals; however, it is not clear whether or not the latter result may be extended for a given arcwise connected set $A$. Observe, for instance, that if one considers sets $\C_{\a,\b}$ as in Theorem \ref{main} for all $\a,\b\in A$, then the set
$$\C:=\bigcup_{\a,\b\in A} \C_{\a,\b}
$$
may not be connected. 
As pointed out in \cite{soso}, an easy extension is obtained when $A$ is a Peano space, namely, when there exists a continuous surjective curve $\gamma:[0,1]\to A$. 
These spaces are characterized by the Hahn-Mazurkiewicz theorem and include, in particular, all those sets $A$ that are homeomorphic to a compact convex subset of a normed space. The main result in \cite{soso} extends the result to the case in which $A$ 
is a connected compact Hausdorff space, but not necessarily locally connected. However, the 
compactness assumption may be dropped 
in the specific case, frequent  
in applications, in which Fix$(f_\a)$  is a singleton for some
$\a\in A$. This is due to the obvious fact that, in such situation, the set
$$\C:= \bigcup_{\b\in A} \C_{\a,\b}
$$
is indeed connected since the intersection of all the  sets $\C_{\a,\b}$ is nonempty. 

\begin{cor}\label{coro}
In the situation of Theorem \ref{main}, assume there exists $\a\in A$ such that $\textup{Fix}(f_\a)
=\{ x\}$ for some $x\in X$. Then there exists   
a connected set $\C\subset \mathcal F_A$ such that the projection $\pi:\C\to A$ defined by $\pi(\b,x)=\b$ is onto.  
\end{cor}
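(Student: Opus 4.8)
The plan is to prove Corollary \ref{coro} by directly exhibiting the connected set as a union of the sets guaranteed by Theorem \ref{main}, exploiting the singleton hypothesis to force the union to be connected. The key observation, which the excerpt already anticipates, is that when $\textup{Fix}(f_\a)=\{x\}$ is a singleton, each $\C_{\a,\b}$ (the connected set connecting $\mathcal F_\a$ and $\mathcal F_\b$ provided by Theorem \ref{main}) must contain the single point $(\a,x)$, because $\C_{\a,\b}\cap \mathcal F_\a\ne\emptyset$ and $\mathcal F_\a=\{(\a,x)\}$. Thus all the sets $\C_{\a,\b}$ share the common point $(\a,x)$.

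First, I would fix the distinguished $\a\in A$ with $\textup{Fix}(f_\a)=\{x\}$. For each $\b\in A$, apply Theorem \ref{main} to obtain a connected set $\C_{\a,\b}\subset\mathcal F_A$ with $\C_{\a,\b}\cap\mathcal F_\a\ne\emptyset$ and $\C_{\a,\b}\cap\mathcal F_\b\ne\emptyset$. Since $\mathcal F_\a=\{\a\}\times\textup{Fix}(f_\a)=\{(\a,x)\}$, the first condition forces $(\a,x)\in\C_{\a,\b}$ for every $\b$. Then I would define
$$\C:=\bigcup_{\b\in A}\C_{\a,\b}\subset\mathcal F_A.$$
Because each $\C_{\a,\b}$ is connected and they all contain the common point $(\a,x)$, their union $\C$ is connected (the standard fact that a union of connected sets with nonempty common intersection is connected). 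Finally, for surjectivity of $\pi:\C\to A$, I would note that for any $\b\in A$ the point witnessing $\C_{\a,\b}\cap\mathcal F_\b\ne\emptyset$ lies in $\{\b\}\times X$, so some $(\b,y)\in\C$ satisfies $\pi(\b,y)=\b$; hence $\pi$ is onto.

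I do not expect any genuine obstacle here, since the result is essentially a formal consequence of Theorem \ref{main} together with the elementary topological lemma on unions of connected sets sharing a common point. The only point requiring a little care is the justification that each $\C_{\a,\b}$ actually passes through $(\a,x)$ rather than merely \emph{intersecting} the fiber over $\a$ — but this is immediate precisely because the fiber $\mathcal F_\a$ has been reduced to the single point $(\a,x)$ by hypothesis. This is exactly the mechanism flagged in the discussion preceding the statement, where it is observed that dropping the compactness assumption on $A$ is possible in this singleton case because the intersection of all the sets $\C_{\a,\b}$ is nonempty.
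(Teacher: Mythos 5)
Your proof is correct and follows essentially the same route as the paper: the author also defines $\C:=\bigcup_{\b\in A}\C_{\a,\b}$ and observes that the union is connected because the singleton hypothesis forces every $\C_{\a,\b}$ to contain the common point $(\a,x)$. Your write-up merely makes explicit the two details the paper leaves implicit (the common-point lemma for unions of connected sets, and the surjectivity of $\pi$ via $\C_{\a,\b}\cap\mathcal F_\b\ne\emptyset$), so there is nothing to add.
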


\section{Short proof of Theorem \ref{main} with $A=[0,1]$}

By the Dugundji extension theorem \cite{D}, there exists a compact 
extension of $f$ 
to the whole space $E$ with range in $X$. Thus, we may  assume  $f:[0,1]\times E\to X$. 
It is clear that the sets Fix$(f_t)$ remain the same, because all the  fixed points of the extended function lie in $X$.  
Define
as before $\mathcal F:=\mathcal F_{[0,1]}$, 
which is  a compact subset of $[0,1]\times E$ 
because Fix$(f_t)\subset \overline{\textup{Im}(f)}$ is closed for all $t$. 
Suppose the result is false, then by  
Whyburn's Lemma \cite{why}, there exists
an open bounded set  
$U\subset [0,1]\times E$ containing 
 $\mathcal F_0$  and disjoint with $\mathcal F_1$ such that $\partial U\cap \mathcal F =\emptyset$. The latter property 
simply says, for all $t$, that $f_t$ has no fixed points $x$ satisfying $(t,x)\in \partial U$. Thus, the homotopy invariance of the Leray-Schauder degree applies, namely,   $\deg_{LS}(I-f_t,U_t,0)$ does not depend on $t\in[0,1]$, where
$$U_t:= \{ x:(t,x)\in U\}.
$$
Next, take $R>0$ sufficiently large such that   $X\cup U_0\subset B_R(0)$, then 
$I-\lambda f_0$ does not vanish 
on $\partial B_R$ for $\lambda\in [0,1]$. 
Hence, because 
 $ \textup{Fix} (f_0)\subset U_0$, 
the excision property of the degree and the homotopy invariance imply that  
$$\deg_{LS}(I-f_0,U_0, 0)=\deg_{LS}(I-f_0,B_R(0), 0) = 
\deg_{LS}(I,B_R(0), 0) =1.
$$
On the other hand, since $\mathcal F_1\cap\overline U=\emptyset$, it follows that $\textup{Fix}(f_1)\cap \overline {U_1} =\emptyset$; thus, 
$$\deg_{LS}(I-f_1,U_1, 0)=0,$$
which contradicts the fact that the degree is constant over the homotopy. 
\hfill{$\Box$}
\begin{rem}\label{schau}
The preceding result can be proven by means of Schauder's theorem only. 
Indeed, let us give a slightly simplified version of the argument presented 
in \cite{soso2}. In the situation of the previous proof, recall that the Whyburn lemma also ensures the existence 
of disjoint compact sets  $K_0$ and $K_1$ containing 
respectively $\mathcal F_0$ and $\mathcal F_1$ and such that
$\mathcal F=K_0\cup K_1$. 
 Next, define a continuous Urysohn function $\mu:[0,1]\times X\to [0,1]$ such that 
$$\mu|_{K_j}\equiv j, \qquad j=0, 1
$$
and the compact operator $F:[0,1]\times X\to [0,1]\times X$ given by
$$F(t,x)= (1-\mu(t,x), f(t,x)).
$$
From Schauder's theorem, $F$ has a fixed point $(t,x)\in \mathcal F$, whence $\mu(t,x)=0$ or $\mu(t,x)=1$. This implies, respectively, that $t=1$ or $t=0$, which contradicts the fact that $\mathcal F_j\subset K_j$. 

\end{rem}

\section{Applications:  from known results to open pro-blems} 
 \label{applic}

In this section, we shall present  applications of the Browder theorem to several boundary value problems. In what follows, some new results shall be established, as well as novel viewpoints of known results. 

In the first place, we shall consider a nonlocal boundary value problem, for which   
both the results and the approach are, to the author's knowledge, new. 
The second application concerns   pendulum-like equations and more general second-order resonant problems. 
We shall start with very  well 
known results from \cite{castro} and \cite{four-maw}, with the aim of showing that  
the Browder theorem can be also seen as an efficient tool for shortening proofs. 
In particular, the method of non-well ordered upper and lower solutions can be understood as an immediate consequence of the existence of a \textit{continuum} of  fixed points of an appropriate operator. Moreover, we shall unveil a connection between 
the ideas in the present paper and the celebrated
Landesman-Lazer theorem, which seems to be unexplored in the literature. 
As it  shall be pointed out, some new results 
are deduced as well; specifically, those related to 
the range of resonant semilinear operators for systems and its topological properties. 
Finally, Subsection \ref{applic}.\ref{chemost} is devoted to a delayed chemostat model previously studied by the author. Here, the goal is to give a more concise approach in terms of the Browder theorem. 
Furthermore, an open question shall be posed, regarding the possibility of obtaining an alternative proof, based only on a Schauder fixed point argument.

\begin{enumerate}

\item \textbf{A nonlocal boundary value problem}

\label{nonloc} 
Let $\Omega=(-L,L)\subset \R$ be a bounded interval and  consider the following  problem 
for some continuous functions $f:\overline\Omega\times\R\to \R$ and $g:\R\to\R$:
\begin{equation}
\label{nonlocal}
\left\{\begin{array}{cc}
u''(t) = f(t,u(t))     &  t\in \Omega  \\
u(\pm L)= g(u(0)).     & {}
\end{array}\right.
\end{equation}
Here, the nonlocal boundary condition may be regarded as a nonlinear instance of the three-point boundary condition studied in \cite{sun}. For a more general multi-point boundary condition, see e.g. \cite{graef}.

An easy application of Theorem \ref{main}
can be implemented as follows. Observe that, if $u$ is a solution, 
then $u|_{\partial \Omega} = g(u(0)):=c$ and the function 
$v:=u-c$ is a solution of the problem 
 \begin{equation}
\label{homog} v''(t)= f(t,v(t)+c),\qquad v|_{\partial \Omega=0}
 \end{equation}
 satisfying
 \begin{equation}
 \label{bc} g(v(0)+c)= c.
     \end{equation}
Conversely, if $v$ is a solution of (\ref{homog}) for some $c\in \R$ such that (\ref{bc}) is fulfilled, then $u:=v+c$ solves the original problem (\ref{nonlocal}). Thus, we may consider the space $E:=\{v\in C(\overline\Omega) : v|_{\partial\Omega}=0\}$ and the  operator 
$T:\R\times E\to E$ given by $T(c,w):=v$, where $v$ is the unique element of $E$ that solves the linear equation $ v''(t)=f(t,w(t)+c)$. 
Recalling the standard estimate
$$\|v\|_\infty \le k\| v''\|_\infty\qquad v\in E,
$$
it is immediately seen that $T$ is compact. 
In order to verify the assumptions of Theorem \ref{main}, we need to find a closed convex set $X\subset E$ such that $T_c(X)\subset X$ for all $c$. Assume for simplicity that  $f$ is bounded, then
$$\|T(c,w)\|_\infty \le k\|f(\cdot,w+c)\|_\infty\le k\|f\|_\infty:=R
$$
and we conclude that $X:=\overline {B_R(0)}$ is $T_c$-invariant for all $c$. 
From Theorem \ref{main}, for each $a<b$  
there exists a subset 
$$\C\subset  \bigcup_{a\le c\le b} \{c\}\times \textup{Fix}(T_c) $$ 
connecting $\{a\}\times \textup{Fix}(T_a) $ with 
$\{b\}\times \textup{Fix}(T_b)$. Furthermore, the function
$$\Phi(c,w):=   c- g(w(0)+c)
$$
is continuous; thus, in order to solve the original problem, it suffices to find $a, b\in\R$ such that $\Phi$ takes different signs when restricted to  
$\{a\}\times \textup{Fix}(T_a)$ and $\{b\}\times \textup{Fix}(T_b)$. 
A trivial occurrence  of such situation is when $g$ is sublinear or, more generally, when
\begin{equation}
    \label{sublin}\limsup_{|u|\to\infty} \left|\frac {g(u)}u\right| <1. 
\end{equation}
It might be argued that there is no need to invoke Browder's theorem here, because the proof is readily deduced from a direct fixed point argument, by defining instead the compact operator $\tilde T:C(\overline\Omega)\to C(\overline\Omega)$ by $\tilde T(v):=u$, the unique solution of the Dirichlet problem
$$u''(t)=f(t,v(t)),\qquad u(\pm L) =g(v(0)).
$$
Indeed, since there exist $A<1$ and $B>0$ such that $|g(v)|\le A|v|+B$, the fact that
$$\|Tv-g(v(0))\|_\infty \le R,
$$
implies
$$\|Tv\|_\infty \le A|v(0)|+ B + R \le A\|v\|_\infty + B + R.
$$
Thus, taking $M:=\frac {B+R}{1-A}$, it is verified that $T(\overline {B_M(0)})\subset \overline {B_M(0)}$ and Schauder's theorem applies. 
 
 The situation is different when $g$
is superlinear or, more generally,  when
\begin{equation}\label{superlin}
 \liminf_{|u|\to\infty} \frac {g(u)}u >1.
\end{equation}
Here, a nontrivial bounded invariant region for $\tilde T$ may not exist. 
This is comparable with the existence of fixed points for expansive operators: for instance, the only invariant region for the real function  
$f(x):=2x$ is the set composed by the (unique) fixed point of $f$.
Although it is still possible to deduce the existence of solutions by computing the Leray-Schauder degree of $I-\tilde T$, the previous setting in terms  of Theorem \ref{main} allows to reduce the proof to a single line, since it is clear that $\Phi(c,v)<0<\Phi(-c,v)$ for all $v\in\overline {B_R(0)}$ and $c\gg 0$. 
 A more general non-asymptotic assumption reads 
  \begin{equation}\label{non-asym}
  g(a+r) \le a,\qquad g(b+r)\ge b
   \end{equation}
 for some $a, b\in \R$ and $|r|\le R$, which clearly implies 
$\Phi|_{\mathcal F_a}\le 0\le \Phi|_{\mathcal F_b}$. For example, if $g(u)= u + \sin(\sqrt[3]{u})$, then the problem has infinitely many solutions for arbitrary bounded $f$, although $\lim_{|u|\to\infty}\frac{g(u)}u=1$.

\begin{rem}
When $g$ satisfies (\ref{sublin}),  the assumption that $f$ is bounded may be relaxed: for example, it suffices to assume that $f$ is sublinear or, more specifically, that 
$$|f(t,u)| \le \ee |u| + C
$$
for some $C>0$ and $\ee>0$ small enough.

Indeed,  fix $\ee>0$ such that  $k\ee < 1 - \limsup_{|u|\to \infty} \left|\frac {g(u)}u\right|$ and observe that, if $u$ is a solution of the problem with $u(\pm L)=c$, 
then  
$$\|u\|_\infty \le k(\ee \|u\|_\infty + C) + |c|.
$$
As before, set $A,B>0$ with $k\ee + A<1$
such that $|g(v)|\le A|v| + B$, then, because $g(u(0))=c$, 
$$ \|u\|_\infty \le k(\ee \|u\|_\infty + C) + A|u(0)| + B,
$$
whence 
$$\|u\|_\infty\le \frac {kC+B}{1 - k\ee - A}:=R.
$$
Since the latter constant does not depend on $f$, the proof follows from a truncation argument. 

The same conclusion holds if $g$  satisfies (\ref{superlin}), although
the proof is more tricky. We may proceed as follows: 
in the first place, there exists a constant (still denoted $k$) such that if $u$ is a solution then
$$\|u-u(0)\|_\infty\le k\|u''\|_\infty \le k\ee\|u\|_\infty + kC.
$$
This, in turn, implies
$$\|u\|_\infty \le k\ee\|u\|_\infty + kC + |u(0)|.
$$
Next, fix $A>1$ and $B>0$ such that $|g(u)|\ge A|u| - B$, then
$$A|u(0)| \le |g(u(0))| + B = |u(\pm L)| + B\le\|u\|_\infty + B,
$$
that is, 
$$\left(1 - k\ee - \frac 1A
\right)\|u\|_\infty \le \frac BA +C.
$$
In other words, it suffices to take $\ee$ such that $(1-k\ee)\liminf_{|u|\to\infty} \frac {g(u)}u >1$.
\end{rem}

A less elementary matter  arises when dealing with systems rather than a scalar equation. Observe, indeed, 
that if $f:\overline \Omega\times \R^N\to \R^N$ is bounded, 
then the previous machinery  
can be arranged
 exactly in the same 
way as before and, by the ``strong" version of the Browder theorem, for an arbitrary closed ball $\overline B\subset\R^N$
there exists a connected set 
$$\C\subset \bigcup_{c\in \overline B} \{c\}\times \textup{Fix}(T_c) $$
whose projection to $\overline B$ is onto.  
In spite of that, it is not clear if  the $N$-dimensional analogue of the preceding
function $\Phi$  vanishes at some $(c,w)$ because, in principle, $\Phi(\C)$ may not be simply connected. Solutions are  obtained as fixed points of the previous operator $\tilde T$, both in the sublinear and the superlinear cases, namely
$$\limsup_{|u|\to\infty} \frac {|g(u)|}{|u|}  <1
$$
and
$$\liminf_{|u|\to\infty} \frac {\langle g(u),u\rangle }{|u|^2}  >1
$$
respectively, where $|\cdot|$ stands now for the norm of $\R^N$. More generally, let us observe
that the non-asymptotic  condition (\ref{non-asym}) has an $N$-dimensional (strict) analogue for systems:

\begin{prop} In the previous setting, assume there exists $D\subset \R^N$ open such that:
\begin{enumerate}
    \item $g(r+c)\ne c$ for every $c\in \partial D$ and $|r|\le R$. 
    \item $\deg(I-g,D,0)\ne 0$. 
\end{enumerate}
Then system (\ref{nonlocal}) admits at least one solution $u$ with $u|_{\partial\Omega}\in D$. 

\end{prop}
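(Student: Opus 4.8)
The plan is to bypass Browder's theorem entirely and to prove the statement by a direct computation of the Leray--Schauder degree of $I-\tilde T$, exploiting the fact that switching off the right-hand side $f$ collapses the problem to the purely algebraic equation $g(\xi)=\xi$. Concretely, I would keep the compact operator $\tilde T:C(\overline\Omega,\R^N)\to C(\overline\Omega,\R^N)$, $\tilde T(v)=u$, where $u$ solves $u''=f(t,v)$, $u(\pm L)=g(v(0))$, whose fixed points are exactly the solutions of (\ref{nonlocal}), and embed it into the homotopy $\tilde T_\lambda(v)=u_\lambda$ given by $u_\lambda''=\lambda f(t,v)$, $u_\lambda(\pm L)=g(v(0))$, for $\lambda\in[0,1]$. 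Writing $u_\lambda$ as the sum of the constant $g(v(0))$ and the unique $w\in E$ solving $w''=\lambda f(\cdot,v)$ shows that $\tilde T_\lambda$ is compact and jointly continuous, and that at $\lambda=0$ it reduces to the finite-rank map $\tilde T_0(v)=g(v(0))$ (the constant function), whose fixed points are the constants $\xi$ with $\xi=g(\xi)$.

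Next I would establish the a priori estimate that makes the homotopy admissible on a suitable open set. If $v=\tilde T_\lambda(v)$ and $c:=g(v(0))$, then $v-c$ solves $(v-c)''=\lambda f(t,v)$ with zero boundary values, so $\|v-c\|_\infty\le k\|f\|_\infty=R$; setting $r:=v(0)-c$ we obtain $|r|\le R$ and $g(c+r)=c$, whence the assumption $g(r+c)\neq c$ on $\partial D$ forces $c=g(v(0))\notin\partial D$. This suggests the open bounded set
$$\mathcal O:=\{v\in C(\overline\Omega,\R^N):g(v(0))\in D,\ \|v-g(v(0))\|_\infty<R+1\},$$
on whose boundary no fixed point of any $\tilde T_\lambda$ can lie, since such a fixed point satisfies both $g(v(0))\notin\partial D$ and $\|v-g(v(0))\|_\infty\le R<R+1$. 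Homotopy invariance then gives $\deg_{LS}(I-\tilde T,\mathcal O,0)=\deg_{LS}(I-\tilde T_0,\mathcal O,0)$.

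It remains to evaluate the degree at $\lambda=0$. As $\tilde T_0$ has range in the $N$-dimensional space $V$ of constant functions, the reduction property of the Leray--Schauder degree identifies $\deg_{LS}(I-\tilde T_0,\mathcal O,0)$ with the Brouwer degree of $\xi\mapsto\xi-g(\xi)$ on
$$\mathcal O\cap V=\{\xi\in\R^N:g(\xi)\in D,\ |\xi-g(\xi)|<R+1\}.$$
The zeros of $I-g$ in this set are precisely the fixed points of $g$ lying in $D$ (they automatically satisfy the second constraint, with value $0$), and the hypothesis $g(r+c)\neq c$ taken with $r=0$ rules out fixed points of $g$ on $\partial D$; an excision argument through the common open subset $(\mathcal O\cap V)\cap D$, which contains all these zeros, then yields $\deg_{LS}(I-\tilde T_0,\mathcal O,0)=\deg(I-g,D,0)\neq0$. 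Consequently $\deg_{LS}(I-\tilde T,\mathcal O,0)\neq0$, so $\tilde T$ has a fixed point $u\in\mathcal O$, which solves (\ref{nonlocal}) and satisfies $u|_{\partial\Omega}=g(u(0))\in D$.

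The main obstacle I anticipate is the bookkeeping in this final reduction/excision step: one must check that the auxiliary bound $\|v-g(v(0))\|_\infty<R+1$, introduced only to keep $\mathcal O$ bounded, produces no spurious zeros of $I-g$ on $\partial(\mathcal O\cap V)$, and that the fixed-point sets of $g$ in $D$ and in $\mathcal O\cap V$ genuinely coincide, so that excision legitimately transfers the computation to $\deg(I-g,D,0)$. The non-degeneracy hypothesis plays a double role here, closing exactly this gap: with the full range $|r|\le R$ it secures admissibility of the homotopy, while with $r=0$ it guarantees that $\deg(I-g,D,0)$ is well defined in the first place.
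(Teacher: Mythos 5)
Your proof is correct, and its core strategy is in fact the same as the paper's: the paper also proves this proposition by a pure Leray--Schauder degree argument (not via Theorem \ref{main}), built on a homotopy that switches off $f$ so that the degree collapses onto $\deg(I-g,D,0)$, with hypothesis (1) used twice --- for general $|r|\le R$ to make the homotopy admissible, and for $r=0$ to control the finite-dimensional endpoint. The implementations, however, differ genuinely. The paper keeps the splitting $u=c+w$ and works on the product $\overline D\times\overline{B_R(0)}\subset\R^N\times E$ with the operator $\mathcal T_\lambda(c,w):=(g(w(0)+c),\lambda T(c,w))$: the boundary then decomposes into $\partial D\times\overline{B_R(0)}$, excluded by hypothesis (1), and $\overline D\times\partial B_R(0)$, excluded for $\lambda<1$ by $\|\lambda T(c,w)\|_\infty<R$ (a fixed point of $\mathcal T_1$ there is already a solution with $u|_{\partial\Omega}\in D$, so nothing remains to prove), and at $\lambda=0$ the identity $\deg(I-\mathcal T_0,D\times B_R(0),0)=\deg(I-g,D,0)$ follows at once from the product structure. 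You instead work in the unsplit space $C(\overline\Omega,\R^N)$ with $\tilde T$, whose fixed points are exactly the solutions of (\ref{nonlocal}); the price is the ad hoc open set $\mathcal O$ (the radius $R+1$ serving only to keep it bounded), the reduction property of the degree for the finite-rank endpoint $\tilde T_0(v)=g(v(0))$, and the excision bookkeeping between $\mathcal O\cap V$ and $D$ --- all of which you carry out correctly, since hypothesis (1) with $r=0$ rules out fixed points of $g$ on $\partial D$ and every zero of $I-g$ in either $\overline{\mathcal O\cap V}$ or $\overline D$ lands in the common open set $(\mathcal O\cap V)\cap D$. What your route buys is conceptual economy: no product space, and it reuses the operator $\tilde T$ the paper had already introduced in the scalar case. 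What the paper's route buys is brevity: the two steps you single out as the main obstacle (spurious zeros created by the auxiliary bound, and the transfer of the computation to $D$) simply do not arise, because the product structure performs the reduction and the excision automatically. One caveat common to both arguments: $D$ must implicitly be bounded (or the degrees understood in a generalized sense), since otherwise neither $\deg(I-g,D,0)$ nor the Leray--Schauder degrees above are classically defined; it would be worth stating this explicitly.
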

\begin{proof}
Define the  operator 
$\mathcal T_\lambda:\overline D\times \overline {B_R(0)}\to \R^N\times \overline{B_R(0)}$ given by
$$\mathcal T_\lambda(c,w):= (g(w(0)+c), \lambda T(c,w)).
$$
If $\mathcal T_1$ has a fixed point on the boundary, then there is nothing to prove. Otherwise, suppose  that  $\mathcal T_\lambda(c,v)=(c,v)$ for some $(c,v)\in \partial (D\times B_R(0))$ and $0\le \lambda< 1$, then $c\in D$ and $R=\|v\|_\infty =\lambda \|T(c,v)\|_\infty <R$, a contradiction. This implies
$$\deg(I - \mathcal T_1, D\times B_R(0),0) =\deg(I - \mathcal T_0, D\times B_R(0),0) = \deg(I-g,D,0),
$$
and the result follows. 
\end{proof}

Besides the preceding degree argument, it would be interesting to
analyse if a proof is possible by means of Theorem \ref{main} only. 

\begin{rem}
The problem treated in the present section may be 
considered as an ODE analogue of the more general problem studied in \cite{gennaro}. Here, not only the boundary condition but also the equation may contain nonlocal terms. 
An example of a nonlocal ODE with local boundary conditions is introduced 
in \cite{thom}, arising on a two ion electro-diffusion model. In \cite{AKR}, a two-dimensional  shooting technique is developed in order to prove the existence of solutions;
however, it is not clear whether or not the result can be obtained by means of a Browder type argument only.

\end{rem}

 \item \textbf{Range of some semilinear operators at resonance}
 \label{pendulum}
 
 In a celebrated paper from 1980, A. Castro \cite{castro} posed the following question: 
 for which $\omega$-periodic forcing terms $p(t)$ the pendulum equation
 $$u''(t) + \sin u(t) =p(t)
 $$
 admits an $\omega$-periodic solution? 
 The problem is called \textit{resonant}, because
the kernel of the 
associated linear operator $\mathcal Lu:=u''$ over the space of $\omega$-periodic solutions is 
nontrivial. Together with the fact that 
the nonlinear term $g(u):=\sin u$ is bounded, this implies that the problem has no $\omega$-periodic solutions for some choices of $p$: indeed, taking average at both sides of  the equation yields the necessary condition $|\overline p|\le 1$. 
However, this condition is not sufficient, 
so the answer to the previous question is far from being obvious. Using variational methods, it is easy to prove that the equation admits at least one $\omega$-periodic solution when $\overline p=0$, because the ($\omega$-periodic) associated functional achieves a minimum. In view of this, it proves convenient to write $p=p_0 + s$, where $p_0$ has zero average; thus, the problem of describing 
the range of the semilinear 
operator $S u:= u'' + \sin u$ is reduced to find, for each $p_0$, the set $I(p_0)$ of values $s\in [-1,1]$ such that an $\omega$-periodic solution of the equation exists. If for simplicity we 
consider $S:D\subset C_\omega\to C_\omega$, where 
$C_\omega$ denotes the space of $\omega$-periodic continuous functions and $D:=C_\omega\cap C^2$, then the results in \cite{castro} 
for $\omega\le 2\pi$ imply that, for each $p_0\in \widetilde C_\omega:=\{ \varphi\in C_\omega:\overline\varphi=0\}$ the set $I(p_0)$ is a  compact interval containing the origin, whose endpoints depend  continuously on $p_0$.  
Thus, identifying $C_\omega$ with $\widetilde C_\omega\times \R$, 
the range of $S$ can be expressed as
$$\textup{Im}(S)= 
\bigcup_{p_0\in \widetilde C_\omega} \{p_0\} \times I(p_0).
$$
The proofs in \cite{castro} rely on the variational 
structure of the problem; a few years later, Fournier 
and Mawhin 
\cite{four-maw} extended the results 
for  arbitrary periods and assuming that   the equation contains a friction term $au'$, so the problem is 
non-variational when $a\ne 0$. 
Here, we shall analyse the method used in \cite{four-maw} through the lens of the Browder theorem. 
It is worthy to mention that the setting applies for more general problems such as
\begin{equation}\label{nonvar} 
u''(t) + au'(t) + g(u(t)) = p_0(t) + s
    \end{equation}
with $g$ continuous and bounded, which has been treated for example in \cite{ht}. 
With this in mind, let us observe 
that the periodic problem for (\ref{nonvar}) can be   
regarded as 
nonlocal, in the following sense. 
Because  $p_0$
is $\omega$-periodic, if $u$ is a 
solution of (\ref{nonvar}), then $u$ is $\omega$-periodic if and only if 
$$u(0)=u(\omega),\qquad u'(0)=u'(\omega)
$$
or, since $\overline {p_0}=0$, if and only if
$$u(0)=u(\omega), \qquad \overline{g(u)} = s.
$$
As it may be recalled, this procedure 
 is the essence of the Lyapunov-Schmidt decomposition. 
Again, a compact operator is defined by solving 
a homogeneous Dirichlet problem; namely, for $(c,w)\in \R\times C_\omega$, let $v:=T(c,w)$ be defined as the unique solution of the linear problem
\begin{equation}\label{dir-hom}
v''(t) + av'(t) = p_0(t) - g(c+w(t)) + \overline {g(c+w)}, \qquad v(0)=v(\omega)=0.
\end{equation}
Upon integration, it follows that $v\in C_\omega$, which implies 
Im$(T)\subset C_\omega$. Hence, 
$v$ is a fixed point of $T_c$ if and only if $u:=c+v$ is an $\omega$-periodic solution of the problem for $s=\overline{g(u)}$. 
Furthermore, using again standard estimates we obtain
$$\|v\|_\infty \le k \|v'' + av'\|_\infty\le R
$$
for some constant $R$, that is, 
$T(\R\times \overline{B_R(0)})\subset \overline{B_R(0)}$. 
Thus, if we define as before
$$\mathcal F_\R =  \bigcup_{c\in\R} \{c\}\times \textup{Fix}(T_c)
$$
then the set $I(p_0)$ coincides with $\I(\mathcal F_\R)$, where the continuous  
mapping
$\I:\R\times C_\omega\to \R$ given by 
$\I(c,w):=\overline {g(c+w)}$. It is clear 
that the set $I(p_0)$ is contained in the interval $[-\|g\|_\infty,\|g\|_\infty]$, although 
its compactness is not guaranteed unless we assume extra conditions on $g$. One of these possible extra conditions is that, as in 
the pendulum equation, $g$ is $\sigma$-periodic for some $\sigma>0$, which allows to restrict the mapping $\I$ 
to the compact set $\mathcal F_{[0,\sigma]}$. The same happens when \textit{a priori} bounds 
exist: if one knows that all the possible $\omega$-periodic solutions of the problem   are bounded by 
a constant $M$ depending only on $\|p_0\|_\infty$, then $\I$ can be restricted to the set $\mathcal F_{[-M,M]}$.

A typical situation in which compactness fails is the Landesman-Lazer case, that is, when the limits
$$g_{\pm}:= \lim_{u\to \pm \infty} g(u)
$$
exist and are distinct. 
If furthermore   
$g(u)\ne g_\pm$ for all   
$u\in \R$, then it is well known that $I(p_0)$ is the (open) interval of all those values 
lying strictly between $g_-$ and $g_+$. 
Perhaps we don't need a new proof of this but, again, Browder's theorem reduces it to an easy exercise. To fix ideas, suppose that $g_-<g_+$ and 
define as before a mapping $T(c,w)$ from solving the 
linear problem (\ref{dir-hom}) and, for arbitrary 
$s\in (g_-,g_+)$ define the 
continuous function $\Phi(c,w):= s- \overline{g(w+c)}$. Take $M$ sufficiently large such that 
$$g(u) > s > g(-u)\qquad u\ge M,
$$
then, for $w\in \overline{B_R(0)}$ we deduce, for $c:=R+M$, that 
$w(t)+ c\ge M$ and $w-c\le -M$. Thus, if $\mathcal C\subset \mathcal F_{[-c,c]}$ connects $\mathcal F_{-c}$ and $\mathcal F_c$, the mapping $\Phi$ changes sign over $\mathcal C$ and, consequently, vanishes at some point. 
Conversely, 
when $s\ge g_+$ or $s\le g_-$, the non-existence of solutions 
is deduced simply by integrating the equation.

Also, we cannot ensure that $0\in I(p_0)$: 
this may be false even for $g(u)=\sin u$, provided that $a\ne 0$. This was shown in the early work \cite{rafa-pend}, 
see e.g.  \cite{maw-75} for a complete survey of the problem. 
Due to Schauder's theorem, the mapping $T_c$ has fixed points for all $c$; thus  $I(p_0)$ is 
nonempty although, in general, it cannot be proven that it contains more than one point. This is one of the most famous  
open problems concerning the pendulum equation: 
decide whether or not there exists $p_0$ such that $I(p_0)$ is a singleton. Such a situation is called \textit{degenerate} and it was demonstrated that, generically, it does 
not occur, that is: except possibly for a meager subset of $\widetilde C_\omega$. Observe, incidentally, that 
if $I(p_0)=\{s\}$ then all the solutions of the problem
$$u''(t)+au'(t) + \sin(u(t)) =
p_0(t)+\overline{\sin(u)}, \qquad u(0)=u(\omega)=c
$$
for arbitrary $c$ satisfy $\overline{\sin(u)}=s$, so we can deduce the existence of  a \textit{continuum} $\C$ of such solutions $u_c\in C_\omega$ with $c\in\R$. Differently from what occurs in the general abstract case, here $\C$ is indeed a curve, because, as shown 
by Ortega and Tarallo in \cite{rafa-cont}, 
 $u_c$ is unique and the map $c\mapsto u_c$ is continuous. 
Amazingly, 
this can be seen, once more, as 
a consequence of 
Theorem \ref{main}. To this end,  following  
the ideas in \cite{rafa-cont}, let us firstly prove 
that different 
solutions do not intersect at any point.  
Suppose that $u$ and $v$ are solutions such that $v(t_0)>u(t_0):=\eta$ at some $t_0$. Without loss of generality, we may assume 
$t_0=0$ and take, for $a< \eta-2R$, 
a subset $\C\subset \mathcal F_{[a,\eta]}$ that connects $\mathcal F_{a}$ and $\mathcal F_\eta$. 
Let 
$\hat\C\subset C_\omega$ be the projection of $\C$, 
then $v\notin \hat\C$ and
observe that if $v$ intersects some $w\in \hat \C$ at some $t$, 
then the uniqueness of the initial value problem implies 
$v'(t)\ne w'(t)$. In turn, this implies that the set 
$$A_v:=\{w\in\hat\C: \hbox{$w$ intersects $v$}\}
$$
is open and closed; in other words, $v$ does not intersect any element of $\hat \C$. 
Now suppose  $u\notin\hat \C$ and
observe that, as before, the set  $A_u$ is open and closed 
in $\hat\C$. 
Moreover, $u$ intersects some element of 
$\hat\C$ at $t=0$, so $A_u=\hat\C$.  
But, on the other hand, there exists $w\in \hat \C$ such that $w(0)=a$
which satisfies, for all $t$,
$$w(t) < a + R < \eta - R < u(t),
$$
a contradiction. Thus, $u\in\hat \C$ and, consequently, does not intersect $v$. The continuity of the map $c\mapsto u_c$ follows now straightforwardly.

In contrast with the problem of degeneracy, proving 
that $I(p_0)$ is always an interval is easy and, again, relies on the existence of a \textit{continuum}.
Indeed, 
it suffices to 
prove that  
if $s_2<s_1$ are two elements of $I(p_0)$, 
then $s\in I(p_0)$ for any $s\in (s_2,s_1)$.
Take $(c_j,u_j)\in \mathcal F$ preimages of $s_j$ for $j=1,2$,
then 
$$u_1''(t) + au_1'(t) + g(u_1(t)) > p_0(t) + s > u_2''(t) + au_2'(t) + g(u_2(t)),
$$
that is, $u_1$ and $u_2$ are, respectively, a lower and an upper solution of the problem. 
The issue here is that $u_1$ and $u_2$ are not necessarily well-ordered, although the existence of a solution is still verified because $g$ is bounded. 
This is  a well known fact, whose proof  becomes almost effortless with the help of Theorem \ref{main}. To this end,
 fix a constant $M$ such that 
 $$M > \max\{ \|u_1\|_\infty, \|u_2\|_\infty \}+ R.$$
Since any solution $u$ satisfies $\|u-u(0)\|_\infty\le R$, it is  deduced, for $u(0)=\pm M$, that 
$\max\{ \|u_1\|_\infty, \|u_2\|_\infty \} < M - |u(t)-u(0)|$  for all $t$, that is
$$u_1(t), u_2(t) < u(t)\qquad \hbox{if $u(0)=M$},
$$ 
$$u_1(t), u_2(t) > u(t)\qquad \hbox{if $u(0)=-M$}.
$$  
 Next,  consider a set $\C\subset \mathcal F$ connecting 
$\mathcal F_{-M}$ and $\mathcal F_{M}$. 
If $\I$ takes the value $s$  at some point in $\C$, then we are done, otherwise $\I -s$ has constant sign, say 
e.g. $\I|_{\C}>s$. 
Then, we may take an element  $(-M,u)\in \C$ and  observe that, because $\overline{g(u)}=\I(-M,u)>s$, the function 
$u$ is a lower solution of the problem for $s$ and, consequently, 
$(u,u_2)$ is a well-ordered couple of a lower and an upper solution.  If we assume, instead, that  $\I|_{\C}<s$, then taking $(M,v)\in \C$ produces the well-ordered couple $(u_1,v)$.

Next, let us consider the problem of establishing the continuity of the   mapping $p_0\mapsto I(p_0)$. As mentioned, for the pendulum equation this was proven in \cite{castro} 
for the variational case and later extended to
the non-variational 
case by means of topological arguments (see  \cite{maw-75}). 
However, the subject was not tackled in \cite{ht} 
for arbitrary bounded $g$, although
the proof can be also performed in a very simple and direct way as follows. Assume that $p_n\in C_\omega$ has zero average for all $n$ and $p_n\to p_0$ uniformly. In order to prove that $I(p_n)\to I(p_0)$,
we shall proceed in two steps. 
Firstly, 
let 
$b_n:= \sup{ I(p_n)}$   and consider an arbitrary subsequence,  
still denoted $\{b_n\}$, converging to some value $b$. Fix $s<b$, then there exist 
  $\tilde b_{n}\in I(p_{n})$ such that 
  $s< \tilde b_{n}-\ee$ for some $\ee>0$. 
 Taking  $u_{n}\in C_\omega$ solutions for $p_{n} + \tilde b_{n}$, we obtain;
 $$u_{n}''(t) + au_{n}'(t) + g(u_{n}(t)) =p_0(t) + s + p_{n}(t) -p_0(t) + \tilde b_{n} - s > p_0(t)+s
 $$
for $n\gg 0$, that is, $u_{n}$ is a lower solution  of the problem for $p_0+s$. 
In the same way, if we take an arbitrary subsequence
of $a_n:= \inf{ I(p_n)}$ converging to
some $a<s$, then a lower solution for $p_0+s$ is obtained and, as before, we conclude that a solution exists when $a<s<b$. 
Because the subsequences are arbitrary, this already proves that 
$$(\liminf_{n\to\infty} a_n, \limsup_{n\to\infty} b_n)\subset I(p_0).
$$
In the second place, 
suppose that 
$s\in I(p_0)$ satisfies $s>b$ and fix $\ee>0$
such that
 $s>b_{n}+2\ee$ for all $n$. Let 
 $u\in C_\omega$ be a solution for $p_0+s$, 
  then it follows as before that
  $u$ is a lower solution for the problem corresponding to $p:=p_{n}+  b_{n} + \ee$, provided that 
    $n\gg 0$. 
     On the other hand, setting
       $\tilde b_{n}\in I(p_{n})$  and
     a solution $u_{n}\in C_\omega$  for 
    $p_{n}+ \tilde b_{n}$, then 
 it is clear that $u_{n}$ is an upper solution for $p$. This implies that $b_{n}+\ee\in I(p_{n})$, a contradiction. 
  An analogous argument holds if $s<a$ and, again, due to the fact that  the subsequences are arbitrary we conclude that 
    $$\overline{I(p_0)} \subset [\limsup_{n\to\infty} a_n, \liminf_{n\to\infty} b_n].$$ 
By combining the two steps, it is deduced that 
$$(\liminf_{n\to\infty} a_n, \limsup_{n\to\infty} b_n)
\subset [\limsup_{n\to\infty} a_n, \liminf_{n\to\infty} b_n];$$
summarizing, the sequences $\{a_n\}$ and $\{b_n\}$ converge respectively to the lower and upper endpoints of $I(p_0)$.

It is worth recalling, in pendulum-like equations, 
that the periodicity 
of $g$ also implies, when  $s$ is an interior point of $I(p_0)$, the 
existence of a second solution which is {geometrically different}, in the sense  
that it does not differ with the first one by an integer multiple of the period. 
This is a consequence of the so-called \textit{three solutions theorem}; we sketch a proof for the sake of completeness. Assume that $g(t+\sigma)\equiv g(t)$ and set  solutions $u_1$,  $u_2$  corresponding to some $s_2 <s_1$. Fix  $s\in (s_2,s_1)$, then there exists $k\in \Z$ such that 
$$u_1(t)< u_2(t) +  k\sigma, \qquad u_1(t)+\sigma\not< u_2(t) + k\sigma
$$
and the excision property of the Leray-Schauder degree
shall be applied  to the sets
$$\Omega_1:= \{u\in C_\omega:  u_1(t)< u_2(t) + k\sigma\},$$
$$
\Omega_2:= \{u\in C_\omega:  u_1(t)+\sigma< u_2(t) + (k+1)\sigma\},
$$
both contained in 
$$
\Omega:= \{u\in C_\omega:  u_1(t)< u_2(t) + (k+1)\sigma\}.
$$
To this end, as usual,  we may define  
$K:C_\omega\to C_\omega$ given by $Kw:=v$, the unique $\omega$-periodic solution of the (non-resonant) problem
$$v''(t)+av'(t) - v(t)= p_0(t) + s - g(w(t)) -  w(t).
$$
A standard result in the theory of strict well-ordered upper and lower solutions ensures  
that the degree  of $I-K$ over $\Omega_1$, $\Omega_2$ and $\Omega$ is equal to $1$; hence, there exist   solutions $w_j\in \Omega_j$
and $w\in \Omega\backslash (\Omega_1\cup\Omega_2)$ and, clearly,   at least two of  
the three solutions $w, w_1$ and $w_2$ are geometrically distinct.

As in our first example, we may also make
some considerations  about the non-scalar 
case.  The construction can be done exactly in the same way as before, and the existence of a nonempty bounded set 
$I(p_0)=\textup{Im}(\I)\subset \R^N$ follows. 
Using the mean value theorem for vector integrals, it is readily verified that $I(p_0)$ is contained in the convex hull of Im$(g)$; moreover, $I(p_0)$ is compact if, for example, $g$ satisfies some periodicity assumption, e. g.
\begin{equation}\label{per-sys}
g(u+ \sigma_je_j) \equiv g(u),
\end{equation}
where $\sigma_j>0$ and $\{e_1,\ldots,e_N\}$ is a basis of $\R^N$.
But an issue occurs when one tries 
to prove the connectedness, because 
the method of well-ordered upper and lower 
solutions for systems  usually requires stronger conditions, e.g.  
$\a_j\le \b_j$ and
$$
\a_j''(t) + a \a_j'(t) + g_j(\hat \a(t)) \ge p(t) 
\ge \b_j''(t) + a \b_j'(t) + g_j(\hat \b(t))
$$
for $j=1,\ldots, N$, where 
$\hat\a_j=\a_j$, $\hat \beta_j=\b_j$ and $\a_k\le \hat \a_k, \hat \b_k\le \b_k$ for all $k$. 
These conditions might be of some help   when dealing with weakly coupled systems but, 
in general, 
the problem of determining the shape of $I(p_0)$, or even if 
it is connected,  is open.

An alternative approach can be
employed if one assumes the same assumption imposed in \cite{castro} for the scalar case, namely the relaxed monotonicity condition
\begin{equation}
\label{uniq}
\frac{\langle g(u)-g(v),u-v\rangle}{|u-v|^2} < \left(\frac {2\pi}\omega\right)^2 \qquad \forall\, u\ne v.\end{equation}
This case was 
already 
analysed in  \cite{Ku} for 
the  variational case, that is, with $a=0$ and $g=\nabla G$. However, the following extension was not previously
treated in the literature:

\begin{prop}
In the previous situation, assume that
 (\ref{uniq}) holds. Then  $I(p_0)$ is arcwise connected.

\end{prop}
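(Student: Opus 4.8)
The plan is to prove something stronger than connectedness: under (\ref{uniq}) the entire solution set is a continuous graph over the kernel $\R^N$, so that $I(p_0)$ is a continuous image of $\R^N$. I would carry out the Lyapunov--Schmidt reduction already hinted at in the text. Writing $u=\xi+\tilde u$ with $\xi:=\overline u\in\R^N$ and $\overline{\tilde u}=0$, the equation $u''+au'+g(u)=p_0+s$ splits into the kernel equation $\overline{g(u)}=s$ and the range equation
$$\tilde u''+a\tilde u'+\bigl(g(\xi+\tilde u)-\overline{g(\xi+\tilde u)}\bigr)=p_0,\qquad \overline{\tilde u}=0.$$
Since $u\mapsto u''+au'$ is invertible on the zero--average periodic functions, the range equation amounts to a fixed point problem for a compact operator $S_\xi$, and the boundedness of $g$ yields, via the usual estimate, a bound $\|\tilde u\|_\infty\le R$ that is \emph{uniform} in $\xi$; hence Schauder's theorem gives a solution $\tilde u=\Psi(\xi)$ for every $\xi\in\R^N$.

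The crux is uniqueness of $\Psi(\xi)$. If $\tilde u_1,\tilde u_2$ both solve the range equation for the same $\xi$, put $w:=\tilde u_1-\tilde u_2$, a zero--average function. Testing the difference of the two equations against $w$ and integrating over $[0,\omega]$, periodicity kills the friction term and turns the second--derivative term into $-\int_0^\omega|w'|^2$, while the projected average term drops out because $\int_0^\omega w=0$, leaving
$$\int_0^\omega|w'|^2=\int_0^\omega\langle g(\xi+\tilde u_1)-g(\xi+\tilde u_2),\,w\rangle.$$
By (\ref{uniq}) the right--hand side is at most $(2\pi/\omega)^2\int_0^\omega|w|^2$, with strict inequality as soon as $w\not\equiv 0$, whereas the Poincar\'e--Wirtinger inequality for zero--average functions gives $\int_0^\omega|w'|^2\ge (2\pi/\omega)^2\int_0^\omega|w|^2$. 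Comparing the two forces $w\equiv 0$. Thus every solution has the form $u=\xi+\Psi(\xi)$, so the kernel equation reads $s=\overline{g(\xi+\Psi(\xi))}$ and $I(p_0)=\{\overline{g(\xi+\Psi(\xi))}:\xi\in\R^N\}$.

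It remains to establish continuity of $\xi\mapsto\Psi(\xi)$, after which $\xi\mapsto\overline{g(\xi+\Psi(\xi))}$ is continuous and $I(p_0)$ is the image of the arcwise connected set $\R^N$ under a continuous map, hence arcwise connected: two values $s(\xi_0),s(\xi_1)$ are joined by the path $s\circ\gamma$ for any segment $\gamma$ from $\xi_0$ to $\xi_1$. The continuity of $\Psi$ is the step I expect to require the most care, although it is routine given the above: if $\xi_n\to\xi$, the uniform bound confines $\{\Psi(\xi_n)\}$ to a fixed ball, compactness of the solution operator extracts a subsequence converging to some limit, that limit solves the range equation at $\xi$, and uniqueness identifies it with $\Psi(\xi)$; as every subsequence admits a further subsequence with the same limit, the whole sequence converges. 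The one genuinely delicate point is the strict-inequality bookkeeping in the uniqueness argument, i.e. ruling out that equality in Wirtinger coexists with the strict pointwise bound (\ref{uniq}) on the positive-measure set $\{w\ne0\}$.
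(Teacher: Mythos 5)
Your proof is correct and is essentially the paper's own argument: the same decomposition $u=\xi+\tilde u$ into average plus zero-average part, existence of the zero-average solution for each $\xi$ (the paper obtains it from a Leray--Schauder degree computation built on its Dirichlet-problem operator $T$, you more directly from Schauder's theorem applied to the projected equation on zero-average periodic functions --- both work, and yours is arguably simpler), uniqueness via (\ref{uniq}) combined with Wirtinger's inequality, continuity of $\xi\mapsto\Psi(\xi)$ by compactness plus uniqueness, and the conclusion that $I(p_0)$ is the continuous image of the arcwise connected set $\R^N$. Your final worry about ``strict-inequality bookkeeping'' is unfounded: if $w\not\equiv 0$ then $\{w\neq 0\}$ is open and nonempty, so the pointwise strict bound from (\ref{uniq}) integrates to the strict inequality $\int_0^\omega|w'|^2\,dt<\left(\frac{2\pi}{\omega}\right)^2\int_0^\omega|w|^2\,dt$, which flatly contradicts Wirtinger's inequality $\int_0^\omega|w'|^2\,dt\geq\left(\frac{2\pi}{\omega}\right)^2\int_0^\omega|w|^2\,dt$ --- no analysis of the equality case in Wirtinger is needed, and this is exactly how the paper argues.
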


\begin{proof}
Given 
$x\in\R^N$, let us prove the following claim: there exists  a unique  $\omega$-periodic function $w_x$  such that 
$w_x'' + aw_x' + g(x+w_x) - p_0 
$ 
is constant and $\overline{w_x}=0$. 

\textit{Existence}: We may  consider now the operator 
$$\mathcal T^x(c,w):= (x-\overline {T(c,v)},  T(c,v)),
$$
and prove as before that the degree of $I-\mathcal T^x$ over a large ball of $\R^N\times C_\omega$ is equal to $1$. This implies the existence of  
$v$ such that $v=T(c,v)$ and $\overline v +c=x$; thus,  $w_x:= c+v-x$ fulfills all the requirements.

\textit{Uniqueness}:  
Suppose that $w_1\ne w_2$ are solutions, then   $w:=w_1-w_2$ verifies
$$w''(t) + aw'(t) + g(x+w_1(t)) - g(x+w_2(t))= C
$$
for some $C\in\R^N$. 
Multiplying by $w$ and using  that $\overline w=0=w(\omega)-w(0)$, we get
$$\int_0^\omega |w'(t)|^2\, dt = \int_0^\omega \langle g(x+w_1(t)) - g(x+w_2(t)),w(t)\rangle\, dt $$
$$< \left(\frac {2\pi}\omega\right)^2\int_0^\omega |w(t)|^2\,dt,
$$
which contradicts  Wirtinger's inequality.

Furthermore, observe that  the mapping $x\mapsto w_x$ is continuous. 
Indeed, assume $x_n\to x$ and  consider the respective solutions 
$w_n:=w_{x_n}$ and $w:=w_x$. If $w_n\not\to w$, then passing to a subsequence  we may assume that 
$w_n$ converges to some $z\ne w$ for the $C^1$ norm and $w_n''$ converges weakly in $L^2$. Multiplying the equation by test functions, it is   readily seen that $z=w$, a contradiction.

We conclude that
the set $I(p_0)$ is characterized as the range of the continuous function of $\R^N$ defined by
$$x\mapsto  \overline{g(x+w_x)}
$$
and so completes the proof.
\end{proof}

Regarding the continuity of $I$ with respect to $p_0$, it is observed
that, as far as we do not have any information about the shape of the set $I(p_0)$, we need to use some 
notion of distance between bounded sets. The obvious choice is the Hausdorff metric, namely, given $K_1, K_2$ compact subsets of $\R^N$, the distance given by
$$d_H(K_1,K_2):= \max \{\sup_{y\in K_1}d(y,K_2), \sup_{y\in K_2}d(y,K_1)\}. 
$$
We shall prove that the mapping $I$ is continuous 
when the nonlinearity is periodic or of Landesman-Lazer type and leave the  general situation
for future research. 
Again, we recall that, up to the author's knowledge, only  the  variational case under condition (\ref{per-sys}) was 
previously treated in the referred work  \cite{Ku}.  

\begin{prop}
Assume that (\ref{uniq}) holds and the sequence $\{p_n\}\subset \widetilde C_\omega$ converges to $p_0$ uniformly. Assume, furthermore, that either $g$ satisfies (\ref{per-sys}) or has radial limits
$$g_v:=\lim_{r\to+\infty} g(rv)
$$
 uniformly for $v$ in the unit sphere $S^{N-1}\subset \R^N$. 
Then $\overline{I(p_n)}\to \overline{I(p_0)}$ for the Hausdorff metric.  
\end{prop}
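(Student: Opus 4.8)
The plan is to reduce the whole statement to the characterization obtained under condition (\ref{uniq}). For every $p\in\{p_0\}\cup\{p_n\}$ and every $x\in\R^N$ there is a unique zero-average $\omega$-periodic $w_x^p$ for which $w_x^p{}''+a\,w_x^p{}'+g(x+w_x^p)-p$ is constant, and
$$I(p)=\{G(p,x):x\in\R^N\},\qquad G(p,x):=\overline{g(x+w_x^p)}.$$
Thus $\overline{I(p)}$ is the closure of the image of a single continuous $\R^N$-valued map, and the proposition will follow from the elementary remark that if $G$ is continuous on $P\times K$ with $K$ compact, then $d_H\big(G(p_n,K),G(p_0,K)\big)\le \sup_{\xi\in K}|G(p_n,\xi)-G(p_0,\xi)|$, the right-hand side tending to $0$ as $p_n\to p_0$ because a jointly continuous function on $P\times K$ with $K$ compact is continuous from $P$ into $C(K,\R^N)$. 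Two things must therefore be arranged: the joint continuity of $G$, and a reduction of the domain $\R^N$ to a compact set $K$ with $\overline{I(p)}=G(p,K)$.

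First I would upgrade the continuity of $x\mapsto w_x$ already proved above to joint continuity of $(p,x)\mapsto w_x^p$. The a priori estimate used before gives $\|w_x^p\|_\infty\le k\|w_x^p{}''+a\,w_x^p{}'\|_\infty\le R$, with $R$ depending only on $\|g\|_\infty$ and a uniform bound for $\|p_n\|_\infty$ (finite since $p_n\to p_0$); hence $\{w_x^p\}$ is bounded in $C^2_\omega$ and precompact in $C^1_\omega$. Given $(p_k,x_k)\to(p_0,x)$, any $C^1$-limit $z$ of a subsequence of $w_{x_k}^{p_k}$ satisfies, upon passing to the limit in the weak formulation (with $w_{x_k}^{p_k}{}''$ converging weakly in $L^2$), the defining equation for $w_x^{p_0}$; the uniqueness part of the previous proposition forces $z=w_x^{p_0}$, so the whole sequence converges. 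This is exactly the argument used there, now carried out with $p$ varying as well, and continuity of $G$ is immediate.

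It remains to replace $\R^N$ by a compact $K$. In the periodic case (\ref{per-sys}) the substitution $x\mapsto x+\sigma_j e_j$ leaves (\ref{dir-hom}) invariant, so $w^p_{x+\sigma_j e_j}=w^p_x$ and $G(p,\cdot)$ descends to a continuous map on the torus $K:=\R^N/\Lambda$, where $\Lambda$ is the lattice generated by the $\sigma_j e_j$; here $I(p)=G(p,K)$ is already compact. In the radial case I would use the closed-ball (radial) compactification $K:=\overline B$ of $\R^N$, setting $G(p,v_\infty):=g_v$ on the sphere at infinity; this boundary value is independent of $p$, the bounded forcing becoming irrelevant at infinity. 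To check continuity of the extension $\widehat G$, take $(p_k,x_k)$ with $|x_k|\to\infty$ and $x_k/|x_k|\to v$: since $\|w_{x_k}^{p_k}\|_\infty\le R$ uniformly, $x_k+w_{x_k}^{p_k}(t)$ has modulus $\to\infty$ and direction $\to v$ uniformly in $t$, and the uniform radial limit together with the continuity of $v\mapsto g_v$ yields $g(x_k+w_{x_k}^{p_k}(t))\to g_v$ uniformly in $t$, so averaging gives $G(p_k,x_k)\to g_v$. Thus $\widehat G$ is continuous on $P\times\overline B$, and since $B$ is dense in $\overline B$ and $\widehat G(p,\cdot)$ is continuous on the compact $\overline B$, one gets $\widehat G(p,\overline B)=\overline{G(p,B)}=\overline{I(p)}$.

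With $K$ compact and $\widehat G$ jointly continuous, the Hausdorff estimate above gives $d_H\big(\overline{I(p_n)},\overline{I(p_0)}\big)\le\sup_{\xi\in K}|\widehat G(p_n,\xi)-\widehat G(p_0,\xi)|\to0$, which finishes the proof. I expect the only genuinely delicate point to be the radial case: verifying that the uniform radial limits survive the bounded perturbation $w_x^p$ and assemble into a boundary datum that is continuous jointly in $p$ and in the direction $v$. In particular one needs $v\mapsto g_v$ to be continuous, which holds because the convergence $g(rv)\to g_v$ is uniform on the compact sphere $S^{N-1}$, so the limit is a uniform limit of continuous functions.
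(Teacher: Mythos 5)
Your proposal is correct, and it reorganizes the argument in a genuinely different way from the paper. The paper works sequence by sequence: for one inclusion it takes $s=\overline{g(x+w_x)}\in I(p_0)$ and uses the joint continuity of $(p,x)\mapsto w_x^p$ to get $\overline{g(x+w_x^n)}\to s$; for the other it argues by contradiction, extracting from $s_n=\overline{g(x_n+w^n_{x_n})}$ a subsequence with either $x_n\to x\in\R^N$ (handled by joint continuity, and implicitly covering the periodic case, where $x_n$ can be normalized into a fundamental domain) or $x_n\to\infty$, $x_n/|x_n|\to v$, in which case both $s_n$ and the points $\overline{g(x_n+w_{x_n})}\in I(p_0)$ converge to $g_v$, so $s_n\to g_v\in\overline{I(p_0)}$. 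You use exactly the same three ingredients --- joint continuity of $(p,x)\mapsto w_x^p$, lattice invariance under (\ref{per-sys}), and the uniform radial limits combined with the uniform bound $\|w_x^p\|_\infty\le R$ --- but you package them as a compactification of the parameter domain: the torus $\R^N/\Lambda$ in the periodic case, the radial compactification $\overline B$ with boundary datum $g_v$ in the other case, followed by the elementary estimate $d_H\bigl(\widehat G(p_n,K),\widehat G(p_0,K)\bigr)\le\sup_{\xi\in K}|\widehat G(p_n,\xi)-\widehat G(p_0,\xi)|$ with $K$ compact. This buys something concrete: Hausdorff convergence requires uniformity over the sets, and your reduction to uniform convergence of $G(p_n,\cdot)$ on a compact $K$ delivers it automatically, whereas the paper's first step, as written, yields only the pointwise statement $d(s,\overline{I(p_n)})\to 0$ for each fixed $s$, which needs an additional (routine, but unstated) compactness argument over $\overline{I(p_0)}$ to become one-sided Hausdorff convergence. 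The price you pay is having to verify continuity of the extension $\widehat G$ up to the sphere at infinity, which you do correctly, including the continuity of $v\mapsto g_v$ --- a point the paper also needs (to conclude $g_v\in\overline{I(p_0)}$) but uses only tacitly.
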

\begin{proof}
Observe, in the first place, 
that the same argument employed before shows that the mapping 
$x\mapsto w_x$   also depends continuously on $p_0$, that is: if $(x_n,p_n)\to (x,p_0)$, then the corresponding sequence  $\{w_{x_n}^n\}$ converges to $w_x$. 
Let $s\in I(p_0)$, 
that is $s=\overline{g(x+w_x)}$ for some $x$. 
By dominated convergence, $\overline{g(x+w^n_x)}\to s$ which, in turn, implies that $d(s,\overline{I(p_n)})\to 0$. 
On the other hand, suppose that 
$d(s_n,I(p_0))\not\to 0$ for some sequence
$s_n=\overline{g(x_n+w^n_{x_n})}$, then passing to a 
subsequence we may assume $s_n\to s$, with $ s \notin\overline{I(p_0)}$ and $x_n\to L$, with $L=x\in\R^N$ or $L=\infty$.
In the first case,
it follows as before that $x_n+w_{x_n}^n\to x+w_x$ for some $x$; hence, 
$s\in I(p_0)$, a contradiction.  This already covers the case  in which (\ref{per-sys}) is satisfied. 
Thus, we may assume that 
$g$ have uniform radial limits and 
 $x_n\to\infty$. Taking a subsequence,  we may 
also assume that $\frac{x_n}{|x_n|}$ converges to some $v\in S^{N-1}$. Consequently, both sequences $\frac{x_n+w_{x_n}^n}{|x_n+w_{x_n}^n|}$ and 
$\frac{x_n+w_{x_n}}{|x_n+w_{x_n}|}$ converge uniformly to $v$. This implies that 
$s_n\to g_v\in \overline {I(p_0)}$, a contradiction. 
\end{proof}

Exactly the same conclusions can be obtained for the elliptic analogue    of the preceding problem, that is
$$\Delta u(x)+ \langle a,\nabla u(x)\rangle + g(u(x))=p(x)  \qquad x\in \Omega, $$
with the nonlocal condition
$$u|_{\partial \Omega}=c,\qquad \int_{\partial \Omega} \frac{\partial u}{\partial \nu} dS=0
$$
where $c$ is an undetermined constant and $\nu$ denotes the outer normal or, equivalently
$$u|_{\partial \Omega}=c,\qquad  \int_\Omega g(u(x))\, dx= \int_\Omega p(x)\, dx. 
$$

    \item \label{chemost} \textbf{A delayed chemostat model revisited}. In \cite{ars}, the following model was considered
    $$\left\{ \begin{array}{l}
         s'(t)= D(t)[s^0(t)-s(t)] -\mu(s(t))\frac {x(t)}\gamma  \\
          x'(t)=x(t)[\mu(s(t-\tau))- D(t)],
    \end{array}\right.
    $$
    where $D, s^0>0$ are continuous $\omega$-periodic functions, $\gamma>0$ is a constant and the mapping $\mu:[0,+\infty)\to [0,+\infty)$ is locally Lipschitz and strictly increasing, with $\mu(0)=0$. 
    It is assumed that $\tau>0$ is a fixed delay and, regarding the existence of $\omega$-periodic solutions,  we may also 
    assume without loss of generality that $\tau<\omega$.
    From the standard theorem of existence and uniqueness, for arbitrary initial values $\varphi\in C[-\tau,0]$ with $\varphi\ge 0$ and $x_0\ge 0$ there exists a unique solution defined on an interval $[-\tau,\delta)$ for some $\delta\in (0, +\infty]$. In particular, when $x_0>0$ it is seen  
    that $x(t)>0$ for all $t$ and, 
    if $s$ vanishes  at   
    some $t_0\in [0,\delta)$ then 
    $s'(t_0)>0$.
    This implies that $s(t)>0$ for $t>0$. Observe, moreover, that if $x_0=0$, then $x\equiv 0$ and $s$ coincides with the unique solution of the problem 
    \begin{equation}
        \label{v}
        v'(t)= D(t)(s^0(t)-v(t))
    \end{equation}
    satisfying   
$$ v(0)=\varphi(0).$$ 
It is readily verified that (\ref{v}) has a unique $\omega$-periodic solution $v^*$, which is positive, and we may call $\varphi^*:=v^*|_{[-\tau,0]}$. 
On the other hand, for $x_0>0$ and $\varphi\in X$, where
    $$X:=\{\varphi\in C[-\tau,0]: 0\le\varphi\le \varphi^*\}, $$
    the corresponding solution $s$ satisfies
    $s(t)<v^*(t)$ for all $t>0$. 
    Indeed, it is noticed that $w:=v^*-s$ satisfies
    $$w'(t) > -D(t)w(t),\qquad t>0
    $$
    and, upon integration, we obtain
    $$w(t) > e^{-\int_0^t D(s)\, ds}w(0)\ge 0.
    $$
As a consequence, all the trajectories $(s,x)$ with initial values 
    $\varphi\in X$ and $x_0\in [0,+\infty)$ are globally defined. Thus, we may consider the Poincar\'e operator
    $$P(\varphi,x_0):= (s_\omega,x(\omega)),
    $$
    where $(s,x)$ is the solution corresponding to the initial values $(\varphi,x_0)$ and $s_\omega\in C[-\tau,0]$ is given as usual 
    by $s_\omega(t):= 
    s(\omega+t)$. It follows that $P:X\times [0,+\infty)\to X\times [0,+\infty)$ is well defined, and a direct application of the Arzel\`a-Ascoli theorem shows that $P$ is compact. 
    In order to find a fixed point of $P$, the previous setting   suggests considering the application $f:[0,+\infty)\times X\to X$ given by 
    $$f(x_0,\varphi):= P^1(\varphi,x_0),
    $$
    where $P^1$ denotes the first coordinate of $P$. Observe, in this case, that
    Fix$(f_0)=\{ \varphi^*\}$; thus,   Corollary \ref{coro} implies the existence of  a connected subset $\C\subset \mathcal F_{[0,+\infty)}$ 
    whose projection to $[0,+\infty)$ is onto. 
    It is seen, anyway, that the weaker conclusion of  Theorem \ref{main} is already enough to deduce the existence of $\omega$-periodic solutions of the system, provided that an appropriate condition is fulfilled. 
    To this end, let us begin by noticing that, 
    if $(s,x)$ is a positive 
    $\omega$-periodic solution, then integration of the first equation yields
    $$\overline{Ds} < \overline{Ds^0} = \overline{Dv^*}
    $$
    where, for an arbitrary  continuous function we define its average $\overline w$ by $\overline w:=\frac 1\omega\int_0^\omega w(t)\, dt$. The previous inequality proves that $s(t)<v^*(t)$ for some $t$ and, as shown before, this implies  $s(t)<v^*(t)$ for all $t$. In other words, when searching for $\omega$-periodic solutions, we may restrict ourselves to look for initial values $(\varphi,x_0)\in X\times (0,+\infty)$. 
    Furthermore, since $\frac {x'}x$ is also $\omega$-periodic, it follows that 
    $$\int_0^\omega [\mu(s(t-\tau))- D(t)]\, dt =0,
    $$
    that is, using the fact that $\mu$ is strictly increasing, 
    $$ \int_0^\omega  D(t) \, dt < 
    \int_0^\omega \mu(v^*(t-\tau)) \, dt =     \int_0^\omega \mu(v^*(t)) \, dt
    $$
or, equivalently:
\begin{equation}\label{nec-suf}
    \overline D  < \overline{\mu(v^*)}.   
    \end{equation}
    As we shall see, 
    the  necessary condition (\ref{nec-suf}) is 
also sufficient for the existence of a positive $\omega$-periodic solution. 
This was already proven  in \cite{ars}, where the strategy consisted in transforming the original system into 
a one-parameter family of integro-differential equations. 
The main goal of the present section  is to emphasize the fact that the result can be retrieved as a direct consequence of the Browder Theorem in a  simple and concise way.
In order to demonstrate this assertion,  let us consider the continuous mapping 
$\Phi:[0,+\infty)\times X
\to \R$ defined
by 
$$\Phi(x_0,\varphi):= 
\overline{\mu(s)} - \overline D, 
$$
where $s$ is the first coordinate of the trajectory corresponding to the 
initial value $(\varphi,x_0)$. Condition (\ref{nec-suf}) ensures that $\Phi(0,\varphi^*)>0$; 
moreover, when $x_0>0$, the inequality $x'(t) > -D(t)x(t)$ implies, as before,
$$x(t) > e^{-\int_0^\omega D(r)\, dr}x_0=e^{-\omega\overline D}x_0\qquad t\in [0,\omega].
$$
Hence, setting $w:=v^*-s\ge 0$, it follows that 
$$w'(t) > -D(t)w(t) + cx_0\mu(s(t))
$$
where the constant $c := \frac  {e^{-\omega\overline D}}{\gamma}$ is independent of $x_0$ and $\varphi$. This means, in turn,
$$v^*(t)> w(t) > w(0)e^{-\int_0^t D(r)\, dr} + cx_0 \int_0^t e^{-\int_\xi^t D(r)\, dr} \mu(s(\xi)) \, d\xi 
$$
and, in particular
$$v^*(\omega)> 
cx_0 e^{-\omega\overline D}\int_0^\omega \mu(s(\xi))\, d\xi. 
$$
Thus, $\overline{\mu(s)}< \frac k{x_0}$ 
for some constant $k$ independent of $x_0$ and $\varphi$, which yields $\Phi(x_0,\varphi)= \overline{\mu(s)} - \overline D<0$  for any $\varphi\in X$, provided that $x_0$ is sufficiently large. 
 By  Theorem \ref{main}, for such $x_0$ there exists $\C_{0,x_0}\subset \mathcal F_{[0,+\infty)}$ connecting $\mathcal F_0=\{ (0,\varphi^*)\}$ with  $\mathcal F_{x_0}=\{x_0\} \times \textup{Fix}(f_{x_0})$. 
The continuous map $\Phi$ changes sign over $\C_{0,x_0}$, so it vanishes at some point 
$(\tilde x_0, \tilde\varphi)\in \C_{0,x_0}$ with $\tilde x_0>0$.  
Because $\tilde \varphi\in \textup{Fix}(f_{\tilde x_0})$, it follows that
$$\ln \tilde x(\omega) - \ln \tilde x_0 =
\int_0^\omega 
[\mu(\tilde s(t-\tau)) - D(t)]\, dt
$$
$$= \int_0^\omega 
[\mu(\tilde s(t)) - D(t)]\, dt
= \omega \Phi (\tilde x_0,\tilde \varphi)=0, $$
that is, $x(\omega) =\tilde x(0)$. In other words, 
$(\tilde\varphi, \tilde x_0)$
is a fixed point of the Poincar\'e map and an easy computation shows that the corresponding solution $(\tilde s,\tilde x)$ is $\omega$-periodic. 

A natural question concerning the previous proof is: 
can it  be carried out by means of Schauder's theorem only? 
Of course, this is possible using the trick described in Remark \ref{schau}, 
but this doesn't mean 
that the Schauder theorem can be directly applied 
to the Poincar\'e map. 
 In our particular situation, 
we need to observe, 
on the one hand, 
that we need to avoid the so-called \textit{trivial solution}, namely $(v^*,0)$, so the domain of $P$ should consider only values of $x_0$ larger than a positive constant.
But, on the other hand, although
the reasoning for $x_0\gg 0$ may start in a similar way,  the conclusion that $ \overline {\mu(s)}< \frac k{x_0}$  for arbitrary $\varphi\in X$ does not necessarily  imply  $x(\omega)< x_0$ for $\varphi\notin \textup{Fix}(f_{x_0})$, because 
$\int_{-\tau}^0\mu(\varphi(t))\, dt$ might be 
larger than $\omega \overline D$. 
This means that the region 
$X\times [\ee,x_0]$ may not be invariant for the Poincar\'e operator. 
A sharper argument allows to conjecture that it is possible to find an invariant set of the form 
$$\{ (\varphi,x): 0\le \varphi\le r(x)\varphi^*, \ee\le x\le x_0 
\}
$$
for some appropriate function $r\le 1$, where $r$ is initially equal to $1$ and then 
decays in such a way that $\int_{-\tau}^0 \mu(r(x)\varphi(t))\, dt < \omega \overline D$ for $x\gg 0$.  However, the choice of $r$ is not obvious and shall studied in a forthcoming paper. 

To conclude, it is worthy mentioning that the previous method does not seem to be applicable to 
different models, such as the one studied in \cite{ars2}, namely
$$\left\{ \begin{array}{l}
         s'(t)= D(t)[s^0(t)-s(t)] -\mu(s(t))\frac {x(t)}\gamma  \\
          x'(t)=e^{-d(t)}\mu(s(t-\tau))x(t-\tau)-D(t)x(t),
    \end{array}\right.
    $$
where  $d(t)=\int_{t-\tau}^{t}D(s)\,ds$ is also $\omega$-periodic. 
Indeed, here the delay appears in both 
unkwown variables; thus, although the Poincar\'e operator can be still defined,  its two coordinates 
lie on an infinite-dimensional space, so the Theorem \ref{main} does not  suffice as a tool for proving the existence of solutions.

\end{enumerate} 

\section{Conclusions} 

An infinite-dimensional version of a theorem by Browder was introduced. The focus of the paper was put on applications to  nonlinear boundary value problems.  With this in mind, new viewpoints of  known results were presented, as well as some novel results and  open problems. 

It is important to observe that, since the Browder theorem provides only a \textit{continuum} of fixed points, it is not clear how to obtain information about the topological degree of the involved maps when dealing with a system instead of a scalar equation.
This is the spirit of the open 
problem proposed in Subsection \ref{applic}.\ref{nonloc}. 
In the same direction, 
the application of Theorem \ref{main} in Subsection \ref{applic}.\ref{chemost}  sheds some light on the difficulties that may arise when dealing with other models that involve a parameter (namely, the initial condition for one of the unknown variables) in an infinite-dimensional space. 
Regarding Subsection 
\ref{applic}.\ref{pendulum},  
the problem of determining the  shape of $I(p_0)$ in the $N$-dimensional case looks like a new and challenging open question. 
The matter of nondegeneracy for the pendulum equation is 
still intriguing 
and, hopefully, the fact that 
some of the arguments in \cite{rafa-cont}  can be established in terms of Theorem \ref{main} may give a new insight to the subject.

\subsection*{Acknowledgements}
This work was supported by projects PIP 11220200100175CO, CONICET, 
UBACyT 20020190100039BA and Project TOMENADE, MATH-Amsud 21-MATH-08.
The author 
wants to express 
his gratitude to {the} anonymous reviewers for the attentive reading of the manuscript and their valuable comments.


\begin{thebibliography}{1}


\bibitem{Ku} {\sc P. Amster and M. P. Kuna}, 
{`On the range of certain semilinear operators for systems at resonance'}, \textit{Electronic Journal of Differential Equations} 2012, 209 (2012), 1--13.

\bibitem{AKR}
{\sc P. Amster, M. K. Kwong, C. Rogers}, 
`On a Neumann boundary value problem for Painlev\'e II in two ion electro-diffusion', 
\textit{Nonlinear Analysis, TMA} 74, No. 9 (2011), 2897--2907.


 \bibitem{ars} {\sc P. Amster, G. Robledo and D. Sep\'ulveda}, 
{`Dynamics of  a chemostat with periodic nutrient supply and delay in the growth'}, \textit{Nonlinearity} 33, 11 (2020), 5839--5860.

\bibitem{ars2} {\sc P. Amster, G. Robledo and D. Sep\'ulveda}, `Existence of $\omega$--periodic solutions for a delayed chemostat with periodic inputs',
 \textit{Nonlinear Analysis, Real World Applications} 55 Art. 103134  (2020), 1--16. 


  \bibitem{browder}
 {\sc F. Browder}, {`On continuity of fixed points under deformation of
continuous mappings'}, \textit{Summa Bras. Math.} 4 (1960), 183--191.

 \bibitem{castro} {\sc A. Castro}, {`Periodic solutions of the forced pendulum equation'},  
 \textit{Diff. Equations} (1980), 149--160. 


\bibitem{D} 
{\sc J. Dugundji}, 
{`An extension of Tietze's theorem'}, \textit{Pacific J. Math.} 1, 3 (1951), 353--367.



\bibitem{E1}
 {\sc B. Eaves}, {`On the basic theorem of complementarity'}, \textit{Mathematical Programming} 1
(1971) 68--75.


\bibitem{four-maw} {\sc G. Fournier and J. Mawhin}. {`On periodic solutions of forced
pendulum-like equations'},
\textit{Journal of Differential Equations} 60 (1985) 381--395.

\bibitem{graef} 
{\sc J. Graef and L. Kong}, `Solutions of second order multi-point boundary
value problems', \textit{Mathematical Proceedings of the Cambridge Philosophical Society} 145 (2008), 
489--510.

\bibitem{ht} 
{\sc P. Habets and P. Torres}, {`Some multiplicity results for periodic solutions of a
Rayleigh differential equation'}, \textit{Dynamics of Continuous, Discrete and Impulsive
Systems Series A: Mathematical Analysis} 8, No. 3 (2001), 335--347.
 
\bibitem{G1}
{\sc P. Herings  and R. Peeters}, 
{`Homotopy Methods to Compute
Equilibria in Game Theory'}, \textit{Economic Theory} 42, 1, (2010), 119--156.


\bibitem{gennaro} 
{\sc G. Infante}, 
  {`Nonzero positive solutions of nonlocal elliptic systems with functional BCs'},
\textit{Journal of Elliptic and Parabolic Equations} 5 (2019), {493--505}




\bibitem{LS} {\sc J. Leray and J. Schauder}, {`Topologie et 
\'equations fonctionnelles'}, \textit{Ann. Sci. Ecole Norm.
Sup.} 3 No. 51 (1934), 45--78.


\bibitem{genericity}
{\sc A. Mas-Colell},   
  {`A \lowercase{NOTE ON A THEOREM OF} F. Browder'},
  \textit{Mathematical Programming} 6 (1974) 229--233. 
  
   \bibitem{maw-75} {\sc J. Mawhin}, {`Global results for the forced pendulum equation'}, \textit{Handbook of Differential Equations: Ordinary Differential Equations} 1 (2004), 533--589, Elsevier/North-Holland, Amsterdam. 



   \bibitem{maw} {\sc J. Mawhin},
   {`Leray–Schauder 
   \lowercase{CONTINUATION THEOREMS
IN THE 
ABSENCE OF A PRIORI BOUNDS}'},
\textit{Topological Methods in Nonlinear Analysis} 9 (1997), 179--200


  \bibitem{ms}
  {\sc O. Munk and E. Solan}, {`Sunspot Equilibrium in Positive Recursive Two-Dimensions Quitting Absorbing Games'}, \textit{ArXiv preprint} 2001.03094 (2020). 


  \bibitem{rafa-pend} {\sc R. Ortega}, {`A counterexample for the damped pendulum equation'}. \textit{Acad. Roy. Belg. Bull. Cl.
Sci.} 73 (1987), 405--409.

\bibitem{rafa-cont} 
{\sc R. Ortega and M. Tarallo}, {`D\lowercase{EGENERATE EQUATIONS OF PENDULUM-TYPE}'}, 
\textit{Communications in Contemporary Mathematics} 02, No. 02 (2000), 127--149.




   \bibitem{shaw} {\sc H. Shaw}, {`A nonlinear elliptic boundary value problem at resonance'}, \textit{Journal of Differential Equations} 26 (1977), 335--346.
   
\bibitem{soso}
{\sc E. Solan and O. Solan}, 
  {`Browder’s theorem with general parameter space'},  \textit{Arxiv preprint} 2104.14612v1 (2021)

\bibitem{soso2}
{\sc E. Solan and O. Solan}, 
{`Browder's theorem through Brouwer's fixed point theorem'}, \textit{Arxiv preprint}
2107.02428 (2021). 

\bibitem{sun} 
{\sc Y. Sun}, {`Optimal existence criteria for symmetric positive solutions to a three-point boundary value
problem'}, \textit{Nonlinear Anal.} 66 (2007), 1051--1063.

\bibitem{thom}
 {\sc H.B. Thompson}, `Existence for two-point boundary value problems in two ion electrodiffusion', \textit{J. Math. Anal. Appl}. 184, (1994), 82--94.


\bibitem{why} 
{\sc G. Whyburn}, {`Topological analysis'}, \textit{Bull. Amer. Math. Soc.} 62, 3 (1956), 204--218.


  \end{thebibliography}
\end{document}